\theoremstyle{plain} 
\newtheorem{theorem}{Theorem}[section] 
\newtheorem{lemma}[theorem]{Lemma}
\newtheorem{corollary}[theorem]{Corollary}
\theoremstyle{definition} 
\newtheorem{remark}[theorem]{Remark}
\newtheorem{example}[theorem]{Example}
\newcommand{\affine}{\mathbb{C}}
\newcommand{\vol}{\mathrm{vol}}
\newcommand{\moduli}{\mathcal{M}}
\title{Remark on energy density of Brody curves}
\author[M. Tsukamoto]{Masaki Tsukamoto}
\subjclass[2010]{32H30, 54H20}
\keywords{Brody curve, energy density}
\date{\today}
\begin{document}

\maketitle

\begin{abstract}
We introduce several definitions of energy density of Brody curves and show that 
they give the same value in an appropriate situation.
\end{abstract}

\section{Introduction.} \label{Introduction}
Let $z=x+y\sqrt{-1}\in \affine$ be the standard coordinate of the complex plane $\affine$.
Let $X$ be a compact Hermitian manifold with the K\"{a}hler form $\omega$, 
and let $f:\affine \to X$ be a holomorphic map.
We define the spherical derivative $|df|(z)\geq 0$ by 
\[ f^*\omega = |df|^2 dxdy.\]
We call $f$ a Brody curve (cf. Brody \cite{Brody}) if it satisfies $|df|(z)\leq 1$
for all $z\in \affine$.
Let $\moduli(X)$ be the space of Brody curves in $X$.
This is equipped with the compact-open topology, and it becomes a compact metrizable space
(possibly infinite dimensional) with the following natural continuous $\affine$-action:
\[ \affine \times \moduli(X)\to \moduli(X), \quad (a, f(z))\mapsto f(z+a).\]

For $f\in \moduli(X)$, we define the energy density $\rho(f)$ (first introduced in \cite{Matsuo-Tsukamoto}) by 
\[ \rho(f) := \lim_{R\to \infty} \left(\frac{1}{\pi R^2}\sup_{a\in \affine}\int_{|z-a|<R}|df|^2 dxdy\right).\]
(This limit always exists by Lemma \ref{lemma: Ornstein-Weiss} in Section \ref{section: technical result}.)
Let $\mathcal{N}\subset \moduli(X)$ be a $\affine$-invariant closed subset.
We define $\rho(\mathcal{N})$ as the supremum of $\rho(f)$ over all $f\in \mathcal{N}$.
We sometimes denote $\rho(\moduli(X))$ by $\rho(X)$.

The idea of introducing $\rho(\mathcal{N})$ began in the paper \cite{Tsukamoto-packing}.
(\cite{Tsukamoto-packing} uses a different definition.)
It has a close relation to the mean dimension theory 
(introduced by Gromov \cite{Gromov}).
The paper \cite{Matsuo-Tsukamoto} proves 
\[ 2(N+1)\rho(\affine P^N) \leq \dim(\moduli(\affine P^N):\affine) \leq 4N\rho(\affine P^N).\]
Here $\affine P^N$ is the projective space with the standard Fubini-Study metric, and 
$\dim(\moduli(\affine P^N):\affine)$ is the mean dimension of $\moduli(\affine P^N)$.
In particular 
\[ \dim(\moduli(\affine P^1):\affine) = 4\rho(\affine P^1).\]

The purpose of the present paper is to study variants of $\rho(\mathcal{N})$ and to show that 
they give the same value.

Let $T(r, f)$ be the Nevanlinna-Shimizu-Ahlfors characteristic function of $f\in \moduli(X)$:
\[ T(r, f) := \int_1^r\left(\int_{|z|<t}|df|^2 dxdy\right)\frac{dt}{t} \quad (r\geq 1).\]
Since $|df|\leq 1$ we have $T(r,f)\leq \pi r^2/2$.
We define $\rho_{\mathrm{NSA}}(f)$ and $\underline{\rho}_{\mathrm{NSA}}(f)$ by 
\begin{equation*}
 \begin{split}
   \rho_{\mathrm{NSA}}(f) &:= \limsup_{r\to \infty}\frac{2}{\pi r^2}T(r,f), \\
   \underline{\rho}_{\mathrm{NSA}}(f) &:= \liminf_{r\to \infty}\frac{2}{\pi r^2}T(r,f).
 \end{split}
\end{equation*}
For a $\affine$-invariant closed subset $\mathcal{N}\subset \moduli(X)$, let 
$\rho_{\mathrm{NSA}}(\mathcal{N})$ and $\underline{\rho}_{\mathrm{NSA}}(\mathcal{N})$ 
be the supremums of $\rho_{\mathrm{NSA}}(f)$ and $\underline{\rho}_{\mathrm{NSA}}(f)$
over $f\in \mathcal{N}$ respectively.
It is easy to see $\underline{\rho}_{\mathrm{NSA}}(f)\leq \rho_{\mathrm{NSA}}(f) \leq \rho(f)$.
Hence $\underline{\rho}_{\mathrm{NSA}}(\mathcal{N})\leq \rho_{\mathrm{NSA}}(\mathcal{N})
\leq \rho(\mathcal{N})$.

The quantity $\rho_{\mathrm{NSA}}(\moduli(X))$ 
naturally appeared in the study of the upper bound on the mean dimension \cite{Tsukamoto-Nagoya}.
\begin{example}
Consider $\mathbb{Z}^2 = \{(x,y)|\, x,y\in \mathbb{Z}\}\subset \affine$.
Let $a_n$ $(n\geq 1)$ be an increasing sequence of positive numbers 
which goes to infinity sufficiently fast. ($a_n=n^2$ will do.)
Set 
\[ \Lambda := \mathbb{Z}^2\cap \left(\bigcup_{n=1}^\infty \{z\in \affine|\, |z-a_n|\leq n\}\right).\]
Let $c>0$. We define a meromorphic function $f(z)$ by 
\[ f(z) := \sum_{\lambda\in \Lambda}\frac{1}{(cz-\lambda)^3}.\]
We can choose $c$
so that $f \in \moduli(\affine P^1)$ and 
\[ \rho(f)>0, \quad \rho_{\mathrm{NSA}}(f) = \underline{\rho}_{\mathrm{NSA}}(f)=0.\]
\end{example}
For $f\in \moduli(X)$ we denote the closure of the $\affine$-orbit of $f$ by $\overline{\affine \cdot f}$.
Our main result is the following:
\begin{theorem} \label{thm: main theorem}
For any $f\in \moduli(X)$ we have 
\[ \rho(f) = \rho(\overline{\affine\cdot f})=\rho_{\mathrm{NSA}}(\overline{\affine\cdot f}) =
 \underline{\rho}_{\mathrm{NSA}}(\overline{\affine\cdot f}).\]
Hence for any $\affine$-invariant closed subset $\mathcal{N}\subset \moduli(X)$ 
\[ \rho(\mathcal{N})= \rho_{\mathrm{NSA}}(\mathcal{N})
   = \underline{\rho}_{\mathrm{NSA}}(\mathcal{N}). \]
\end{theorem}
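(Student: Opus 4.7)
The chain $\underline{\rho}_{\mathrm{NSA}}(\overline{\affine\cdot f})\leq \rho_{\mathrm{NSA}}(\overline{\affine\cdot f})\leq \rho(\overline{\affine\cdot f})$ is immediate by taking suprema in the pointwise inequalities already recorded in the introduction, and $\rho(f)\leq \rho(\overline{\affine\cdot f})$ is trivial because $f\in \overline{\affine\cdot f}$. Thus the proof of the pointwise identity reduces to
\[
 \rho(\overline{\affine\cdot f})\leq \rho(f) \quad \text{and} \quad \rho(f)\leq \underline{\rho}_{\mathrm{NSA}}(\overline{\affine\cdot f}),
\]
and the statement for a general $\affine$-invariant closed $\mathcal{N}$ follows by applying this pointwise identity to each $f\in \mathcal{N}$ together with $\overline{\affine\cdot f}\subset \mathcal{N}$.

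For the first inequality, I would prove upper semicontinuity of $g\mapsto \rho(g)$ under orbit-closure limits. If $g_k=f(\cdot+a_k)\to g$ in the compact-open topology, then standard regularity for holomorphic maps together with the Brody bound $|dg_k|\leq 1$ upgrades this to uniform $C^1$ convergence on compact sets, so $|dg_k|^2\to |dg|^2$ uniformly on each closed disk. Since $\sup_{a\in \affine}\int_{|z-a|<R}|dg_k|^2 dxdy$ is independent of $k$ (it equals $\sup_a\int_{|z-a|<R}|df|^2 dxdy$ by translation invariance), the limit obeys
\[
 \sup_a\int_{|z-a|<R}|dg|^2 dxdy \leq \sup_a\int_{|z-a|<R}|df|^2 dxdy.
\]
Dividing by $\pi R^2$ and sending $R\to\infty$ yields $\rho(g)\leq \rho(f)$ for every $g\in \overline{\affine\cdot f}$.

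The core content is $\rho(f)\leq \underline{\rho}_{\mathrm{NSA}}(\overline{\affine\cdot f})$, and my plan is to exhibit, for each $\epsilon>0$, a point $g_\star\in \overline{\affine\cdot f}$ with $\underline{\rho}_{\mathrm{NSA}}(g_\star)\geq \rho(f)-\epsilon$ through an invariant-measure argument. Pick $R_n\to\infty$ and $a_n\in \affine$ so that $\int_{|z-a_n|<R_n}|df|^2 dxdy\geq (\rho(f)-1/n)\pi R_n^2$, and introduce the F\o lner averages
\[
 \mu_n := \frac{1}{\pi R_n^2}\int_{|a|<R_n}\delta_{f(\cdot+a_n+a)}\,da
\]
on the compact metric space $\moduli(X)$. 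Any weak-$*$ accumulation point $\mu$ is supported in $\overline{\affine\cdot f}$, is $\affine$-invariant because $\vol(B_R\triangle (B_R+c))/\vol(B_R)\to 0$, and satisfies $\int|dg|^2(0)\,d\mu(g)=\rho(f)$ by the continuity of $g\mapsto |dg|^2(0)$ and the choice of $(a_n,R_n)$. Passing to the ergodic decomposition $\mu=\int\mu_e\,d\nu(e)$ and using $\rho(f)=\int\bigl(\int|dg|^2(0)\,d\mu_e\bigr)d\nu(e)$, I can pick an ergodic component $\mu_e$ with $\int|dg|^2(0)\,d\mu_e\geq \rho(f)-\epsilon$. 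The Birkhoff ergodic theorem for the amenable group $\affine$ acting on $(\moduli(X),\mu_e)$ then gives, for $\mu_e$-almost every $g$,
\[
 \lim_{R\to\infty}\frac{1}{\pi R^2}\int_{|z|<R}|dg|^2 dxdy \;\geq\; \rho(f)-\epsilon.
\]
Choose any such $g_\star$. Splitting $T(R,g_\star)=\int_1^R\bigl(\int_{|z|<t}|dg_\star|^2 dxdy\bigr)\,dt/t$ at a threshold $t_0$ beyond which the above ball-average bound holds, the tail $\int_{t_0}^R$ dominates and produces $\liminf_{R\to\infty}\,2T(R,g_\star)/(\pi R^2)\geq \rho(f)-\epsilon$; letting $\epsilon\downarrow 0$ closes the proof.

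I expect the main obstacle to lie in the third step: invoking the Birkhoff ergodic theorem and the ergodic decomposition for the continuous $\affine=\mathbb{R}^2$-action on the possibly infinite-dimensional compact metric space $\moduli(X)$. Both tools are available for amenable-group actions on Polish spaces, and the Ornstein--Weiss machinery promised in Section~\ref{section: technical result} is precisely the amenable-group averaging one needs, so I would route the analytic burden through that section; the final passage from the ball-average limit to the $T(R,g)$-average is then only elementary bookkeeping on the integral $\int_1^R(\cdots)\,dt/t$.
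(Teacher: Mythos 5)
Your proposal is correct, but for the heart of the matter---$\rho(f)\leq \underline{\rho}_{\mathrm{NSA}}(\overline{\affine\cdot f})$---it takes a genuinely different route from the paper. The first two reductions (monotonicity of the three densities, and upper semicontinuity of $\rho$ along orbit-closure limits via locally uniform convergence of $|df|^2(\cdot+a_n)$) coincide with the paper's Step 1. For the key inequality, the paper does not use invariant measures at all: it proves a purely deterministic covering statement (Theorem \ref{thm: technical theorem}, $\tilde\rho=\rho$, via the finite Vitali covering lemma and the Ornstein--Weiss lemma), which produces, for each $\varepsilon>0$, a single radius $r$ and centers $a(R)$ such that \emph{all} ball averages of $|df|^2$ at scales $t\in[r,R]$ around $a(R)$ exceed $\rho(f)-\varepsilon$; a normal-families limit of $f(\cdot+a(R_k))$ then gives $g$ with $\frac{1}{\pi t^2}\int_{B_t}|dg|^2\geq\rho(f)-\varepsilon$ for all $t\geq r$, and the $T(s,g)$ bookkeeping is as in your last step. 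Your route instead pushes the averaging into ergodic theory: Krylov--Bogolyubov along F{\o}lner balls to get an invariant measure $\mu$ on $\overline{\affine\cdot f}$ with $\int|dg|^2(0)\,d\mu=\rho(f)$, ergodic decomposition to select a component with large mean, and Wiener's pointwise ergodic theorem for the $\mathbb{R}^2$-action along balls to extract a generic $g_\star$. All of these steps are legitimate (the action is continuous on a compact metrizable space, $g\mapsto|dg|^2(0)$ is continuous, and the positive-measure selection of an ergodic component with $\int|dg|^2(0)\,d\mu_e\geq\rho(f)-\epsilon$ follows from $\int|dg|^2(0)\,d\mu_e\leq 1$), so the argument closes. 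The trade-off: the paper's approach is elementary and self-contained, and its Theorem \ref{thm: technical theorem} is reused verbatim for Theorem \ref{thm: main theorem 2} (which your measure-theoretic route would have to re-derive separately for a general invariant set $\mathcal{N}$ rather than a single orbit closure); your approach imports heavier machinery but yields a slightly stronger conclusion for $g_\star$, namely that the full limit $\lim_{R\to\infty}\frac{1}{\pi R^2}\int_{|z|<R}|dg_\star|^2\,dxdy$ exists and is large, which recovers the paper's Remark 3.3 for free.
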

The technique of the proof of Theorem \ref{thm: main theorem} also gives the following:
\begin{theorem} \label{thm: main theorem 2}
For any $\affine$-invariant closed subset $\mathcal{N}\subset \moduli(X)$
\begin{equation} \label{eq: statement of main theorem 2}
 \rho(\mathcal{N}) = \lim_{R\to \infty}\left(\frac{1}{\pi R^2}\sup_{f\in \mathcal{N}}\int_{|z|<R}|df|^2dxdy\right).
\end{equation}
\end{theorem}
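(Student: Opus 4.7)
The plan is to express the right-hand side of \eqref{eq: statement of main theorem 2} as an Ornstein-Weiss type limit, to obtain one inequality directly from the definitions, and to obtain the other inequality by extracting an $\affine$-invariant probability measure on $\mathcal{N}$ from empirical averages, paralleling the approach to Theorem \ref{thm: main theorem}.

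First I would introduce the set function $F(\Omega) := \sup_{f\in \mathcal{N}}\int_\Omega |df|^2\,dxdy$ for bounded measurable $\Omega\subset\affine$. The $\affine$-invariance of $\mathcal{N}$ makes $F$ translation-invariant, and $F$ is monotone and sub-additive on disjoint unions. The same Ornstein-Weiss argument used in Lemma \ref{lemma: Ornstein-Weiss} then guarantees that
\[
L := \lim_{R\to\infty}\frac{F(B(0,R))}{\pi R^2}
\]
exists; this is precisely the limit in \eqref{eq: statement of main theorem 2}. The inequality $\rho(\mathcal{N}) \le L$ follows at once from definitions: for $f\in\mathcal{N}$ and $a\in\affine$ the translate $f(\cdot+a)$ again lies in $\mathcal{N}$, so $\int_{|z-a|<R}|df|^2\,dxdy \le F(B(0,R))$, and taking $\sup$ in $a$, the limit in $R$, and $\sup$ in $f$ produces the bound.

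For the reverse inequality fix $\epsilon > 0$ and pick $f_n\in\mathcal{N}$, $R_n\to\infty$, with $\int_{|z|<R_n}|df_n|^2\,dxdy \ge (L-\epsilon)\pi R_n^2$. Form the empirical probability measures $\mu_n := (\pi R_n^2)^{-1}\int_{|z|<R_n}\delta_{f_n(\cdot+z)}\,dxdy$ on the compact metric space $\mathcal{N}$; a subsequence weak-$*$ converges to a probability $\mu$ supported on $\mathcal{N}$, and the F{\o}lner property of the discs $B(0,R_n)$ forces $\mu$ to be $\affine$-invariant. Applying the weak-$*$ limit to the continuous function $\phi(f):=|df|^2(0)$ gives $\int\phi\,d\mu \ge L-\epsilon$, and combining $\affine$-invariance with Fubini yields for every $R>0$
\[
\int_{\mathcal{N}}\frac{1}{\pi R^2}\int_{|z|<R}|df|^2\,dxdy\,d\mu(f) \;=\; \int_{\mathcal{N}}\phi\,d\mu \;\ge\; L-\epsilon.
\]

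Since the inner average is bounded by $1$, Markov's inequality applied to its complement produces $c(\epsilon)>0$ such that the set $A_R := \{f\in\mathcal{N} : \int_{|z|<R}|df|^2\,dxdy \ge (L-2\epsilon)\pi R^2\}$ satisfies $\mu(A_R)\ge c(\epsilon)$ for every $R$. Reverse Fatou along $R=1,2,3,\ldots$ then gives $\mu(\limsup_R A_R)\ge c(\epsilon)>0$, so some $f^*\in\mathcal{N}$ lies in $A_R$ for infinitely many $R$; for this $f^*$, $\limsup_{R\to\infty}\int_{|z|<R}|df^*|^2\,dxdy/(\pi R^2) \ge L-2\epsilon$, and since $\sup_a \int_{|z-a|<R}|df^*|^2 \ge \int_{|z|<R}|df^*|^2$, passage to the Ornstein-Weiss limit defining $\rho(f^*)$ yields $\rho(f^*)\ge L-2\epsilon$. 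Letting $\epsilon\to 0$ completes the proof. The main technical step will be establishing the $\affine$-invariance of $\mu$, which rests on the F{\o}lner estimate $|B(0,R_n)\,\triangle\,(B(0,R_n)+b)|/(\pi R_n^2)\to 0$ for each fixed $b\in\affine$; continuity of $\phi$ on $\moduli(X)$ follows automatically from locally uniform convergence of derivatives in the compact-open topology.
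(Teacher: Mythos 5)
Your proposal is correct, and for the key inequality it takes a genuinely different route from the paper. Both proofs handle the easy direction $\rho(\mathcal{N})\leq L$ identically (via $\affine$-invariance of $\mathcal{N}$) and both invoke the Ornstein--Weiss lemma to get existence of the limit $L$; note only that $F$ is in fact subadditive for \emph{arbitrary} unions, which is what Lemma \ref{lemma: Ornstein-Weiss} requires, not merely for disjoint ones. For the reverse inequality the paper applies its Theorem \ref{thm: technical theorem} (proved deterministically with the finite Vitali covering lemma) to the measures $\mu_f$, obtaining for each $\varepsilon$ a point $a$ and a radius $r_\varepsilon$ with $\int_{B_t(a)}|df|^2\,dxdy\geq(\rho-\varepsilon)\pi t^2$ for \emph{all} $t\in[r_\varepsilon,R]$ simultaneously, and then extracts a limit curve $g_\varepsilon\in\mathcal{N}$ by normal families. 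You instead run a Krylov--Bogolyubov argument: empirical measures on the compact space $\mathcal{N}$, a weak-$*$ limit made $\affine$-invariant by the F{\o}lner property of discs, then Markov's inequality and reverse Fatou to select a single $f^*$ with $\limsup_R\frac{1}{\pi R^2}\int_{B_R}|df^*|^2\,dxdy\geq L-2\epsilon$, which suffices because $\rho(f^*)$ is a genuine limit dominating each term $\frac{1}{\pi R^2}\int_{B_R}|df^*|^2\,dxdy$. Your argument is softer and needs the continuity of $f\mapsto|df|^2(0)$ on $\moduli(X)$ (true by locally uniform convergence of derivatives of holomorphic maps, and used implicitly by the paper as well); what it does not deliver is the stronger ``good density on all large balls $B_t$, $t\geq r_\varepsilon$'' conclusion that the paper's covering-lemma approach provides and that is genuinely needed in Step 2 of Theorem \ref{thm: main theorem} to control the Nevanlinna--Shimizu--Ahlfors characteristic. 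For Theorem \ref{thm: main theorem 2} alone, your route is complete and valid.
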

The proofs of these theorems will be given in Section \ref{section: proofs of the main theorems}.
The essential ingredients of the proofs are the standard argument of normal family
(i.e. the compactness of $\moduli(X)$) and 
a technical result given in Section \ref{section: technical result}.

\section{Technical result.}{\label{section: technical result}
We fix a positive integer $D$ throughout this section. 
(Later we will need only the case $D=2$.)

We introduce one notation on Borel measures:
Let $\mu$ be a Borel measure on $\mathbb{R}^D$, and let $a\in \mathbb{R}^D$.
We define a Borel measure $a.\mu$ on $\mathbb{R}^D$ by 
$(a.\mu)(\Omega) := \mu(a+\Omega)$ where $\Omega\subset \mathbb{R}^D$ and
$a+\Omega:=\{a+x|\, x\in \Omega\}\subset \mathbb{R}^D$.

Let $\moduli$ be a set of Borel measures on $\mathbb{R}^D$ satisfying the following 
two conditions:
\begin{enumerate}
\item[(a)] For any $\mu\in \moduli$ and $a\in \mathbb{R}^D$ we have $a.\mu\in \moduli$.
\item[(b)] $\sup_{\mu\in \moduli} \mu([0,1]^D) < +\infty$.
\end{enumerate}
Under the condition (a), the condition (b) is equivalent to the condition that 
for every bounded Borel subset $\Omega\subset \mathbb{R}^D$ we have 
$\sup_{\mu\in \moduli}\mu(\Omega) < +\infty$.
\begin{example} \label{example: basic exmple of the set of measures}
Let $\varphi:\mathbb{R}^D\to [0,1]$ be a measurable function, and set 
\[ \mu(\Omega) := \int_\Omega \varphi d\mathrm{vol}, \quad (\Omega\subset \mathbb{R}^D).\]
Here $d\vol$ is the standard volume element of $\mathbb{R}^D$.
Then the set $\{a.\mu|\, a\in \mathbb{R}^D\}$ satisfies the above two conditions.
\end{example}
For a Borel set $\Omega\subset \mathbb{R}^D$ we denote its Lebesgue measure by $|\Omega|$.
For $r>0$ and $a\in \mathbb{R}^D$ we set $B_r(a):=\{x\in \mathbb{R}^D|\, |x-a|\leq r\}$.
We usually denote $B_r(0)$ by $B_r$.
We introduce the following two quantities:
\begin{equation*}
 \begin{split} 
  \rho &:= \lim_{R\to \infty} \left(\frac{1}{|B_R|}\sup_{\mu\in \moduli}\mu(B_R)\right),\\
  \tilde{\rho}&:=\lim_{r\to \infty}\left[\lim_{R\to\infty}\left\{\sup_{\mu\in \moduli}\left(\inf_{r\leq t\leq R}
                   \frac{\mu(B_t)}{|B_t|}\right)\right\}\right].
 \end{split}
\end{equation*}
The existence of the limit in the definition of $\rho$ follows from Lemma \ref{lemma: Ornstein-Weiss} below
(see the proof of Lemma \ref{lemma: consequence of OW}).
The quantity 
\[\sup_{\mu\in \moduli}\left(\inf_{r\leq t\leq R}
                   \frac{\mu(B_t)}{|B_t|}\right)\]
is a non-increasing function in $R$ and a non-decreasing function in $r$.
Hence the limits in the definition of $\tilde{\rho}$ exist.

The definition of $\tilde{\rho}$ looks complicated, but it is easy to see $\tilde{\rho}\leq \rho$.
The following result is the main technical tool for the proofs of Theorems \ref{thm: main theorem}
and \ref{thm: main theorem 2}.
\begin{theorem} \label{thm: technical theorem}
$\tilde{\rho}=\rho$.
\end{theorem}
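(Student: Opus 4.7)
Since $\tilde\rho\leq\rho$ is immediate from the definitions, my plan is to prove the reverse inequality $\tilde\rho\geq\rho$. Fix $\epsilon>0$. By translation invariance (condition (a)), finding $\mu\in\moduli$ with $\mu(B_t)/|B_t|\geq\rho-\epsilon$ for all $t\in[r,R]$ is the same as finding a ``good center'' $a\in\mathbb{R}^D$ and some $\mu_0\in\moduli$ for which $\mu_0(B_t(a))\geq(\rho-\epsilon)|B_t|$ on that whole range, because then $(-a).\mu_0\in\moduli$ realizes the desired bound at the origin. I start from a measure supplied by the definition of $\rho$: for huge $R_0$ and any $\delta>0$, there exists $\mu_0\in\moduli$ with $\mu_0(B_{R_0})\geq(\rho-\delta)|B_{R_0}|$.

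The construction of a good center $a\in B_{R_0-R}$ is by contradiction. The Ornstein--Weiss lemma (referenced earlier in the section) provides the essential uniform upper bound: for every $\eta>0$ there exists $L_0$ such that $\mu(B_t(x))\leq(\rho+\eta)|B_t|$ and $\mu(x+[0,L]^D)\leq(\rho+\eta)L^D$ for all $\mu\in\moduli$, all $x\in\mathbb{R}^D$, and all $t,L\geq L_0$. If no good center existed, then every $a\in B_{R_0-R}$ would admit a ``bad'' scale $t_a\in[r,R]$ with $\mu_0(B_{t_a}(a))<(\rho-\epsilon)|B_{t_a}|$. The Vitali $5r$-covering lemma then extracts from the family $\{B_{t_a}(a)\}_{a\in B_{R_0-R}}$ a disjoint subcollection $\{B_{t_i}(a_i)\}_i$ whose $5$-dilates still cover $B_{R_0-R}$. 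Setting $V:=\bigsqcup_i B_{t_i}(a_i)\subset B_{R_0}$, one obtains $|V|\geq 5^{-D}|B_{R_0-R}|$ and $\mu_0(V)<(\rho-\epsilon)|V|$.

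The main obstacle is then to bound the complementary mass $\mu_0(W)$ for $W:=B_{R_0}\setminus V$ by essentially $(\rho+\eta)|W|$, so that $\mu_0(V)+\mu_0(W)<(\rho-\epsilon)|V|+(\rho+\eta)|W|$ will violate the lower bound $\mu_0(B_{R_0})\geq(\rho-\delta)|B_{R_0}|$. Tiling $\mathbb{R}^D$ by cubes of side $L\geq L_0$ and summing the per-cube Ornstein--Weiss bound gives $\mu_0(W)\leq(\rho+\eta)|W^{+L\sqrt{D}}|$, where the enlargement error $|W^{+L\sqrt{D}}|-|W|$ is controlled by $L$ times the total surface area $|\partial V|+|\partial B_{R_0}|$. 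The crucial input is that every Vitali ball satisfies $t_i\geq r$, so that
\[
 \sum_i t_i^{D-1}\leq \frac{1}{r}\sum_i t_i^D = O\!\left(\frac{|V|}{r}\right),
\]
and the enlargement error becomes $O(L/r)|V|+O(L/R_0)|B_{R_0}|$. Choosing parameters in the order $\epsilon\to\eta\to L\to r\to R_0\to\delta$, with $\delta,\eta,L/r,L/R_0$ all much smaller than $\epsilon\cdot 5^{-D}$, the combined estimate collapses to $\epsilon\cdot 5^{-D}\leq\delta+\eta+O(L/r)+O(L/R_0)$, which is the desired contradiction. Letting $\epsilon\to 0$ gives $\tilde\rho\geq\rho$.
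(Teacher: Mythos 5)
Your argument is correct and follows essentially the same route as the paper's proof: assume the good scale fails at every center, extract disjoint bad balls by the Vitali covering lemma (their total volume a definite fraction of the big ball), bound the mass of the complement by roughly $(\rho+\eta)$ times its volume using the Ornstein--Weiss lemma, and contradict the near-maximality of the big ball. The only difference is in the bookkeeping for the complement: where you tile it by cubes and control the enlargement error through $\sum_i t_i^{D-1}=O(|V|/r)$, the paper first trims the Vitali family so that the complement occupies at least half of $B_L$ and then applies the Ornstein--Weiss estimate to the complement directly, with the same condition $t_i\ge r$ guaranteeing that its relative boundary is small.
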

This result might be known to some specialists in harmonic analysis or ergodic theory.
But I could not find a literature containing this result.

We need two lemmas below.
Lemma \ref{lemma: Vitali} is the well-known finite Vitali covering lemma 
(see e.g. Einsiedler-Ward \cite[p. 40, Lemma 2.27]{Einsiedler-Ward}).
Lemma \ref{lemma: Ornstein-Weiss} is a special case of Ornstein-Weiss's lemma.
(This formulation is due to Gromov \cite[p. 336]{Gromov}. 
The original argument was given in Ornstein-Weiss \cite[Chapter I, Sections 2 and 3]{Ornstein-Weiss}.)
\begin{lemma} \label{lemma: Vitali}
Let $a_1,\dots,a_K\in \mathbb{R}^D$ and $r_1,\dots,r_K>0$.
Then we can choose $1\leq i(1)<\dots<i(k)\leq K$ such that 
the balls $B_{r_{i(1)}}(a_{i(1)}), \dots, B_{r_{i(k)}}(a_{i(k)})$ are disjoint and 
\[ \bigcup_{j=1}^K B_{r_j}(a_j)\subset \bigcup_{j=1}^k B_{3r_{i(j)}}(a_{i(j)}).\]
\end{lemma}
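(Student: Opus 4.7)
The plan is a greedy selection, choosing the largest available ball at each step and discarding every ball that meets it. First I would proceed inductively: set $J_0 := \{1, 2, \dots, K\}$, and at step $l \geq 1$, if $J_{l-1} = \emptyset$, stop; otherwise pick $s(l) \in J_{l-1}$ maximizing the radius $r_{s(l)}$ (ties broken arbitrarily), and set
\[ J_l := \{ j \in J_{l-1} \setminus \{s(l)\} : B_{r_j}(a_j) \cap B_{r_{s(l)}}(a_{s(l)}) = \emptyset \}. \]
Since $|J_l| \leq |J_{l-1}| - 1$, the procedure terminates after some $k \leq K$ steps, producing indices $s(1), \dots, s(k)$. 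The balls $B_{r_{s(l)}}(a_{s(l)})$ for $l=1,\dots,k$ are pairwise disjoint, because for $l<m$ the index $s(m)$ survives into $J_l$, and by construction $J_l$ excludes every index whose ball meets $B_{r_{s(l)}}(a_{s(l)})$.

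For the covering conclusion, fix $j \in \{1, \dots, K\}$ and let $l$ be the smallest index such that $j \notin J_l$. Either $j = s(l)$ (so $B_{r_j}(a_j)$ is itself a selected ball and the inclusion is trivial), or $B_{r_j}(a_j)$ meets $B_{r_{s(l)}}(a_{s(l)})$ with $r_{s(l)} \geq r_j$, the latter inequality being forced by greedy maximality at step $l$. In the non-trivial case, pick $p$ in the intersection; then for any $x \in B_{r_j}(a_j)$ the triangle inequality gives
\[ |x - a_{s(l)}| \leq |x - a_j| + |a_j - p| + |p - a_{s(l)}| \leq r_j + r_j + r_{s(l)} \leq 3 r_{s(l)}, \]
so $B_{r_j}(a_j) \subset B_{3 r_{s(l)}}(a_{s(l)})$. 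Combining the two cases shows $\bigcup_{j=1}^K B_{r_j}(a_j) \subset \bigcup_{l=1}^k B_{3r_{s(l)}}(a_{s(l)})$.

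To match the statement's requirement $1 \leq i(1) < \dots < i(k) \leq K$, I would finally relabel: let $(i(1), \dots, i(k))$ be the increasing rearrangement of $(s(1), \dots, s(k))$. Disjointness and the covering inclusion are invariant under permutation of indices, so both conclusions of the lemma hold for this relabeled sequence. The argument is entirely finite and elementary, so there is no serious obstacle; the only point worth double-checking is that the expansion factor $3$ (rather than $2$) is forced by the fact that we retain only the ball centered at $a_{s(l)}$ yet must absorb the full diameter of every intersecting ball of radius up to $r_{s(l)}$, which is exactly what the displayed triangle-inequality chain verifies.
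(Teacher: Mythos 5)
Your proof is correct and is essentially the standard greedy argument that the paper itself invokes by citation (Einsiedler--Ward, Lemma 2.27) rather than proving: select the largest remaining ball, discard everything it meets, and use the triangle inequality with $r_j \leq r_{s(l)}$ to get the factor $3$. All steps check out, including the disjointness via survival in $J_l$ and the final relabeling to obtain increasing indices.
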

Before giving the statement of Lemma \ref{lemma: Ornstein-Weiss} we need to prepare 
some terminologies.
Let $\Omega\subset \mathbb{R}^D$ and $r>0$. We define $\partial_r\Omega$ 
as the set of points $x\in \mathbb{R}^D$ such that $B_r(x)$ has a non-empty intersection both with $\Omega$ 
and $\mathbb{R}^D\setminus \Omega$.
A sequence of bounded Borel subsets $\{\Omega_n\}_{n\geq 1}$ of $\mathbb{R}^D$ is called 
a F{\o}lner sequence if for all $r>0$
\[ |\partial_r\Omega_n|/|\Omega_n| \to 0 \quad (n\to \infty).\]
The sequence $\{B_n\}_{n\geq 1}$ is a F{\o}lner sequence.
The sequence $\{[0,n]^D\}_{n\geq 1}$ is also.
\begin{lemma}\label{lemma: Ornstein-Weiss}
Let $h$ be a non-negative function on the set of bounded Borel subsets of $\mathbb{R}^D$ satisfying 
the following three conditions.

\noindent 
(Monotonicity) If $\Omega_1\subset \Omega_2$, then $h(\Omega_1)\leq h(\Omega_2)$.

\noindent 
(Subadditivity) $h(\Omega_1\cup \Omega_2) \leq h(\Omega_1)+h(\Omega_2)$.

\noindent 
(Invariance) 
For any $a\in \mathbb{R}^D$ and any bounded Borel subset $\Omega \subset \mathbb{R}^D$, we have 
$h(a+\Omega)=h(\Omega)$.

Then for any F{\o}lner sequence $\Omega_n$ $(n\geq 1)$ in $\mathbb{R}^D$, the limit of the sequence 
\[ h(\Omega_n)/|\Omega_n|\quad (n\geq 1) \]
exists, and its value is independent of the choice of a F{\o}lner sequence.
\end{lemma}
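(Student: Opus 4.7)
The plan is to follow the Ornstein--Weiss / Fekete-type argument, adapted to $\mathbb{R}^D$ with Vitali (Lemma~\ref{lemma: Vitali}) supplying the only geometric input. Set
\[ \alpha := \inf\bigl\{ h(\Omega)/|\Omega| : \Omega \subset \mathbb{R}^D \text{ bounded Borel, } |\Omega|>0 \bigr\}. \]
As a preliminary, tiling any bounded $\Omega$ by unit cubes and applying subadditivity and invariance yields an \emph{a priori} linear bound $h(\Omega) \leq C_0\bigl(|\Omega|+|\partial_1\Omega|\bigr)$, with $C_0$ depending only on $h([0,1]^D)$ and $D$. In particular $\alpha<\infty$, and $h(\Omega')/|\Omega'|\leq 2C_0$ whenever $|\partial_1\Omega'|/|\Omega'|$ is small. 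The lower bound $\liminf_n h(\Omega_n)/|\Omega_n| \geq \alpha$ is immediate from the definition of $\alpha$, so the real task is the matching upper bound.

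Fix $\varepsilon>0$ and pick $\Omega^*$ with $h(\Omega^*)/|\Omega^*|<\alpha+\varepsilon$, enclosed in a ball $B_R$. The heart of the proof is a quasi-tiling: produce pairwise disjoint translates $a_1+\Omega^*,\dots,a_k+\Omega^*\subset\Omega_n$ whose union exhausts $\Omega_n$ up to a residue of Lebesgue measure at most $\varepsilon|\Omega_n|+|\partial_R\Omega_n|$. I would do this by iterated Vitali: at each stage apply Lemma~\ref{lemma: Vitali} to the family $\{B_R(x) : x \text{ in the current residue with } B_R(x)\subset\Omega_n\}$, obtaining disjoint balls whose triples cover the \emph{bulk} of the residue. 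Each selected $B_R(x_i)$ contains a translate of $\Omega^*$, and these disjoint translates pack a definite fraction $c := |\Omega^*|/(3^D|B_R|)$ of the bulk; so the bulk of the residue shrinks by a factor $(1-c)$ per iteration, and $O(\log(1/\varepsilon))$ iterations bring the total residue below the target.

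With the quasi-tiling in hand, subadditivity and invariance give
\[ h(\Omega_n) \leq \sum_{j=1}^k h(a_j+\Omega^*) + h(\text{residue}) = k\,h(\Omega^*) + h(\text{residue}), \]
while the a priori bound controls $h(\text{residue}) \leq C_1\bigl(\varepsilon|\Omega_n|+|\partial_R\Omega_n|\bigr)$. Since the translates are disjoint in $\Omega_n$, one has $k|\Omega^*|\leq|\Omega_n|$; dividing by $|\Omega_n|$ yields
\[ \frac{h(\Omega_n)}{|\Omega_n|} \;\leq\; \frac{h(\Omega^*)}{|\Omega^*|} + C_1\!\left(\varepsilon+\frac{|\partial_R\Omega_n|}{|\Omega_n|}\right). \]
Taking $n\to\infty$ (using the F{\o}lner property for the fixed $R$) and then $\varepsilon\to 0$ gives $\limsup_n h(\Omega_n)/|\Omega_n| \leq \alpha$, finishing the proof. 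The hard part is the iterated Vitali step: the residue at each stage is geometrically irregular, so one must carefully split it into a bulk (where $B_R(x)\subset\Omega_n$, Vitali applies, and a uniformly positive fraction is captured) and a boundary layer leaking across $\partial\Omega_n$; only the F{\o}lner hypothesis on $\Omega_n$ makes the boundary layer uniformly negligible across the $O(\log(1/\varepsilon))$ iterations.
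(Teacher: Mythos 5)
The paper offers no proof of this lemma at all---it is imported verbatim from Ornstein--Weiss and Gromov---so your attempt must be judged on its own, and its central step, the iterated Vitali quasi-tiling by a \emph{single} tile $\Omega^*$, has a genuine gap. One application of Lemma~\ref{lemma: Vitali} to the balls $B_R(x)\subset\Omega_n$ yields disjoint balls whose \emph{triples} cover the bulk, so the disjoint translates of $\Omega^*$ placed inside them capture only a fraction $c=|\Omega^*|/(3^D|B_R|)<3^{-D}$ of $|\Omega_n|$, and the iteration you propose to boost this does not run: after the first round the removed tiles can be spread so that \emph{every} residual point lies within distance $R$ of a removed tile. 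If at stage two you insist that $B_R(x)$ be contained in the residue, the family fed to Vitali may be empty and the process stalls at the fraction $c$; if instead (as written) you only require $B_R(x)\subset\Omega_n$ with $x$ in the residue, the new tile $x_i+\Omega^*$ is guaranteed to meet the residue only at the point $x_i$, so it may remove essentially nothing while still incrementing $k$, and it need not be disjoint from the earlier tiles---so both the claimed geometric decay of the residue and the inequality $k|\Omega^*|\leq|\Omega_n|$ fail. The failure is not an artifact of the method: if $D=2$ and the near-minimizer $\Omega^*$ happens to be a disk, Thue's theorem caps the density of any packing by disjoint translates at $\pi/\sqrt{12}\approx 0.907$, so no number of iterations can drive the residue below roughly $9\%$ of $|\Omega_n|$; your a priori bound then only gives $h(\mathrm{residue})=O(|\Omega_n|)$, not $o(|\Omega_n|)$, and the conclusion $\limsup_n h(\Omega_n)/|\Omega_n|\leq\alpha$ does not follow. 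This is exactly the obstruction that forces the genuine Ornstein--Weiss argument to use an $\varepsilon$-quasi-tiling by a \emph{tower} of tiles $F_1\subset\cdots\subset F_N$ at widely separated scales, each sufficiently invariant relative to the previous, with $\varepsilon$-disjointness in place of disjointness.

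The rest of your skeleton is sound and worth keeping: the a priori bound $h(\Omega)\leq C_0(|\Omega|+|\partial_1\Omega|)$ via unit cubes is correct (cubes tile exactly, which is why that step is immune to the difficulty above), the lower bound $\liminf_n h(\Omega_n)/|\Omega_n|\geq\alpha$ is indeed immediate, and once a correct quasi-tiling lemma is available your subadditivity computation and the two limits at the end go through (modulo the harmless point that Lemma~\ref{lemma: Vitali} is stated for finite families, so one must first extract a finite subfamily covering the bulk up to small measure). To repair the argument, replace the single-tile iteration by the multi-scale $\varepsilon$-quasi-tiling lemma of Ornstein--Weiss, or follow the presentation in Gromov cited by the paper.
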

The following is an immediate consequence of Lemma \ref{lemma: Ornstein-Weiss}.
\begin{lemma} \label{lemma: consequence of OW}
For any $\varepsilon>0$ there exists $N=N(\varepsilon) >0$ such that 
every bounded Borel subset $\Omega\subset \mathbb{R}^D$ with $|\partial_N\Omega|/|\Omega|<1/N$
satisfies 
\[ \left|\frac{\sup_{\mu\in \moduli}\mu(\Omega)}{|\Omega|}-\rho\right| < \varepsilon.\]
\end{lemma}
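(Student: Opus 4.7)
The plan is to apply Lemma \ref{lemma: Ornstein-Weiss} to the set function
\[ h(\Omega) := \sup_{\mu \in \moduli} \mu(\Omega), \]
defined on bounded Borel subsets of $\mathbb{R}^D$. First I would verify the three structural conditions. Monotonicity is immediate from $\mu(\Omega_1)\le\mu(\Omega_2)$ and taking suprema. Subadditivity follows from $\mu(\Omega_1\cup\Omega_2)\le\mu(\Omega_1)+\mu(\Omega_2)$ for each fixed $\mu$, then taking the sup. For invariance, condition (a) makes $\mu\mapsto a.\mu$ a bijection of $\moduli$ onto itself (with inverse $\mu\mapsto(-a).\mu$), so
\[ h(a+\Omega) = \sup_{\mu \in \moduli} \mu(a+\Omega) = \sup_{\mu \in \moduli} (a.\mu)(\Omega) = h(\Omega). \]
Condition (b) ensures $h$ is finite on bounded sets.

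Next I would use Lemma \ref{lemma: Ornstein-Weiss} on the F{\o}lner sequence $\{B_n\}_{n\ge 1}$ to conclude that $h(B_n)/|B_n|$ converges to some value $L$ which is independent of the F{\o}lner sequence. A sandwich argument using monotonicity, namely
\[ \frac{h(B_{\lfloor R\rfloor})}{|B_{\lfloor R\rfloor+1}|} \le \frac{h(B_R)}{|B_R|} \le \frac{h(B_{\lfloor R\rfloor+1})}{|B_{\lfloor R\rfloor}|}, \]
combined with $|B_{n+1}|/|B_n|\to 1$, upgrades this to $\lim_{R\to\infty} h(B_R)/|B_R| = L$, simultaneously establishing existence of $\rho$ and the identity $\rho = L$.

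The main statement I would then prove by contradiction. Suppose there exist $\varepsilon>0$ and bounded Borel sets $\Omega_n\subset\mathbb{R}^D$ with $|\partial_n\Omega_n|/|\Omega_n|<1/n$ yet
\[ \left|\frac{h(\Omega_n)}{|\Omega_n|} - \rho\right| \ge \varepsilon. \]
The key observation is that if $r\le n$, then $B_r(x)\subset B_n(x)$, so a ball of radius $r$ meeting both $\Omega_n$ and its complement forces $B_n(x)$ to do the same; hence $\partial_r\Omega_n\subset\partial_n\Omega_n$. For fixed $r>0$, once $n\ge r$ this gives $|\partial_r\Omega_n|/|\Omega_n|<1/n\to 0$, so $\{\Omega_n\}$ is a F{\o}lner sequence. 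Lemma \ref{lemma: Ornstein-Weiss} then forces $h(\Omega_n)/|\Omega_n|\to L=\rho$, contradicting the assumption.

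The only real obstacle is the verification of the three Ornstein--Weiss hypotheses and the mild interpolation from integer $n$ to real $R$ in identifying $\rho$ with $L$; once these are in place, the contradiction argument is automatic, since the hypothesis $|\partial_N\Omega|/|\Omega|<1/N$ is precisely tailored to produce F{\o}lner sequences.
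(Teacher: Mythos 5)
Your proof is correct and follows essentially the same route as the paper: verify the three Ornstein--Weiss hypotheses for $h(\Omega)=\sup_{\mu\in\moduli}\mu(\Omega)$, then argue by contradiction that a violating sequence $\Omega_n$ with $|\partial_n\Omega_n|/|\Omega_n|<1/n$ would itself be a F{\o}lner sequence. Your explicit checks of the inclusion $\partial_r\Omega_n\subset\partial_n\Omega_n$ and the integer-to-real interpolation for $\rho$ are details the paper leaves implicit, but they do not change the argument.
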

\begin{proof}
Set $h(\Omega) := \sup_{\mu\in \moduli}\mu(\Omega)$.
This satisfies the three conditions in Lemma \ref{lemma: Ornstein-Weiss}.
If the above statement is false, then there exist $\varepsilon>0$ and a sequence of bounded Borel subsets 
$\Omega_n\subset \mathbb{R}^D$ with 
$|\partial_n\Omega_n|/|\Omega_n| < 1/n$ satisfying 
\[  \left|\frac{h(\Omega_n)}{|\Omega_n|}-\rho\right| \geq \varepsilon.\]
But $\Omega_n$ is a F{\o}lner sequence. So 
\[\rho = \lim_{n\to \infty}h(\Omega_n)/|\Omega_n|.\]
\end{proof}
\begin{proof}[Proof of Theorem \ref{thm: technical theorem}]
Assume $\tilde{\rho} < \rho-\delta$ for some $\delta>0$.
Set $\varepsilon := \delta/(2\cdot 3^{D+1})$.
Let $N=N(\varepsilon)$ be a positive number given by Lemma \ref{lemma: consequence of OW}.
We choose $r>0$ sufficiently large so that every $t\geq r$ satisfies 
\begin{equation}  \label{eq: choice of r}
 \frac{|\partial_N B_t|}{|B_t|} < \frac{1}{3N}.
\end{equation}
We fix $R>r$ so that 
\[ \sup_{\mu\in \moduli}\left(\inf_{r\leq t\leq R}\frac{\mu(B_t)}{|B_t|}\right) < \rho-\delta.\]
Let $L>R$ be a large number satisfying 
\begin{equation}  \label{eq: choice of L}
 |B_{L-R}|>\frac{|B_L|}{3}, \quad \left(\frac{1}{2}-\frac{1}{3^{D+1}}\right)|B_L| >|B_R|.
\end{equation}
Fix an arbitrary $\mu\in \moduli$.
For each $a\in \mathbb{R}$ there is $t=t(a)\in [r,R]$ such that 
\begin{equation}\label{eq: choice of t(a)}
  \frac{\mu(B_t(a))}{|B_t|} = \frac{(a.\mu)(B_t)}{|B_t|} < \rho-\delta.
\end{equation}
By the finite Vitali covering lemma (Lemma \ref{lemma: Vitali}),
we can choose $a_1,\dots,a_K\in B_{L-R}$ (set $t_i:=t(a_i)$) such that 
$B_{t_i}(a_i)\cap B_{t_j}(a_j)=\emptyset$ $(i\neq j)$ and
\[ B_{L-R}\subset \bigcup_{i=1}^KB_{3t_i}(a_i).\]
By the first condition of (\ref{eq: choice of L})
\[ 3^{-D-1}|B_L| < \sum_{i=1}^K|B_{t_i}(a_i)|.\]
Then we can choose (using the second condition of (\ref{eq: choice of L})) $1\leq J\leq K$ such that 
\begin{equation} \label{eq: choice of J}
  3^{-D-1}|B_L| < \sum_{i=1}^J|B_{t_i}(a_i)| \leq \frac{|B_L|}{2}.
\end{equation}
By (\ref{eq: choice of t(a)}) 
\begin{equation} \label{eq: critical decrease}
  \mu\left(\bigcup_{i=1}^JB_{t_i}(a_i)\right) < (\rho-\delta)\left|\bigcup_{i=1}^JB_{t_i}(a_i)\right|.
\end{equation}
Set $\Omega := B_L\setminus \bigcup_{i=1}^JB_{t_i}(a_i)$.
$|\Omega|\geq |B_L|/2$.
Since $\partial_N\Omega\subset \partial_NB_L\cup \bigcup_{i=1}^J\partial_NB_{t_i}(a_i)$,
\begin{equation*}
 \begin{split} 
   |\partial_N\Omega| &\leq |\partial_N B_L| +\sum_{i=1}^J|\partial_NB_{t_i}(a_i)| \\
   & < \frac{1}{3N}\left(|B_L|+\sum_{i=1}^J|B_{t_i}(a_i)|\right) \quad (\text{by (\ref{eq: choice of r}}))\\
   &\leq \frac{|B_L|}{2N} \leq \frac{|\Omega|}{N} \quad (\text{by (\ref{eq: choice of J}})).
 \end{split}
\end{equation*}
Hence by Lemma \ref{lemma: consequence of OW}
\[\frac{\mu(\Omega)}{|\Omega|} < \rho+\varepsilon.\]
So by (\ref{eq: critical decrease})
\begin{equation*}
 \begin{split}
   \mu(B_L) &= \mu(\Omega)+\mu\left(\bigcup_{i=1}^JB_{t_i}(a_i)\right) \\
   &< (\rho+\varepsilon)|\Omega| +(\rho-\delta)\left|\bigcup_{i=1}^JB_{t_i}(a_i)\right| \\
   &= \rho|B_L| + \underbrace{\left(\varepsilon |\Omega|-\delta\left|\bigcup_{i=1}^JB_{t_i}(a_i)\right|\right)}_A.
 \end{split}
\end{equation*}
By (\ref{eq: choice of J}) and $\varepsilon = \delta/(2\cdot 3^{D+1})$,
\[ A < \varepsilon |B_L|-\delta\cdot 3^{-D-1}|B_L| = -\varepsilon |B_L|.\]
Thus 
\[ \frac{\mu(B_L)}{|B_L|} < \rho -\varepsilon.\]
Since $\mu\in \moduli$ is arbitrary, 
\[ \frac{1}{|B_L|}\sup_{\mu\in \moduli}\mu(B_L) \leq \rho-\varepsilon.\]
We can let $L\to +\infty$. Hence $\rho\leq \rho-\varepsilon$.
This is a contradiction.
\end{proof}

\begin{remark}
In the above proof we have not used the complete additivity of measures $\mu\in \moduli$.
We needed only the monotonicity and subadditivity (two conditions given in Lemma 
\ref{lemma: Ornstein-Weiss}) of $\mu\in \moduli$.
So Theorem \ref{thm: technical theorem} can be also applied to 
a set of monotone, subadditive, non-negative functions on the set of bounded Borel 
subsets of $\mathbb{R}^D$ satisfying the
conditions (a) and (b) in the beginning of this section.
This generalization is not used in this paper.
But it might become useful in some future.
\end{remark}
Applying Theorem \ref{thm: technical theorem} to Example \ref{example: basic exmple of the set of measures},
we get the following corollary:
\begin{corollary}\label{cor: technical corollary}
Let $\varphi:\mathbb{R}^D\to [0,1]$ be a measurable function.
Then 
\[
  \lim_{R\to \infty}\left(\frac{1}{|B_R|}\sup_{a\in \mathbb{R}^D}\int_{B_R(a)}\varphi d\mathrm{vol}\right) \\
  = \lim_{r\to \infty}\left[\lim_{R\to\infty}\left\{\sup_{a\in \mathbb{R}^D}\left(\inf_{r\leq t\leq R}
                   \frac{\int_{B_t(a)}\varphi d\mathrm{vol}}{|B_t|}\right)\right\}\right].\]
\end{corollary}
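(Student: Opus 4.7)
The plan is to apply Theorem \ref{thm: technical theorem} directly to the family of measures constructed in Example \ref{example: basic exmple of the set of measures}. Specifically, set $\mu$ to be the Borel measure on $\mathbb{R}^D$ defined by $\mu(\Omega) := \int_\Omega \varphi \, d\vol$, and take $\moduli := \{a.\mu \mid a\in \mathbb{R}^D\}$. Condition (a) of Section \ref{section: technical result} holds trivially, and condition (b) follows from $0 \leq \varphi \leq 1$, since $(a.\mu)([0,1]^D) = \mu(a+[0,1]^D) \leq 1$ for every $a$.

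Next, I would unfold the two quantities $\rho$ and $\tilde\rho$ attached to this $\moduli$ and identify them with the two sides of the claimed equality. The key observation is the substitution
\[
  (a.\mu)(B_t) \;=\; \mu(a+B_t) \;=\; \mu(B_t(a)) \;=\; \int_{B_t(a)}\varphi \, d\vol.
\]
Using this,
\[
  \sup_{\mu'\in \moduli}\mu'(B_R) \;=\; \sup_{a\in\mathbb{R}^D}\int_{B_R(a)}\varphi\,d\vol,
\]
so the $\rho$ of Section \ref{section: technical result} coincides with the left-hand side of the corollary. An identical substitution inside the inner supremum yields
\[
  \sup_{\mu'\in \moduli}\left(\inf_{r\leq t\leq R}\frac{\mu'(B_t)}{|B_t|}\right)
  \;=\;
  \sup_{a\in\mathbb{R}^D}\left(\inf_{r\leq t\leq R}\frac{\int_{B_t(a)}\varphi\,d\vol}{|B_t|}\right),
\]
so $\tilde\rho$ in this setting coincides with the right-hand side of the corollary.

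Once both sides are rewritten this way, Theorem \ref{thm: technical theorem} asserts $\tilde\rho=\rho$, which is exactly the claimed identity. There is essentially no obstacle here beyond a careful bookkeeping of the translation $a.\mu$ and making sure that passing to the supremum over the full orbit $\{a.\mu\}$ matches the supremum over $a\in \mathbb{R}^D$ that appears on both sides of the corollary; this is automatic from the definition of $a.\mu$. The corollary is therefore a direct specialization of Theorem \ref{thm: technical theorem} to the measure-class of Example \ref{example: basic exmple of the set of measures}.
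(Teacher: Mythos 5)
Your proposal is correct and is exactly the paper's argument: the paper derives the corollary by applying Theorem \ref{thm: technical theorem} to the family $\{a.\mu \mid a \in \mathbb{R}^D\}$ of Example \ref{example: basic exmple of the set of measures}, and your identification $(a.\mu)(B_t) = \mu(B_t(a)) = \int_{B_t(a)}\varphi\,d\mathrm{vol}$ is the only bookkeeping needed to match $\rho$ and $\tilde\rho$ with the two sides of the stated identity.
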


\section{Proofs of Theorems \ref{thm: main theorem} and \ref{thm: main theorem 2}.}
\label{section: proofs of the main theorems}
Let $f:\affine\to X$ be a Brody curve.
We first prove Theorem \ref{thm: main theorem}.

\textbf{Step 1.} $\rho(f) = \rho(\overline{\affine\cdot f})$.
\begin{proof}
It is enough to prove that $\rho(g)\leq \rho(f)$ for all $g\in \overline{\affine\cdot f}$.
Take a sequence $\{a_n\}_{n\geq 1}\subset \affine$ such that $f(z+a_n)$ converges to $g(z)$ 
uniformly over every compact subset of $\affine$.
Let $\varepsilon>0$.
For any $R>0$ and $b\in \affine$ there exists $n_0>0$ such that 
for $n\geq n_0$ 
\[ \left||df|^2(z+a_n)-|dg|^2(z)\right| < \varepsilon \quad (|z-b|<R).\]
Hence for $n\geq n_0$
\begin{equation*}
 \begin{split}
  \frac{1}{\pi R^2}\int_{|z-b|<R}|dg|^2dxdy &\leq \frac{1}{\pi R^2}\int_{|z-a_n-b|<R}|df|^2(z) dxdy + \varepsilon\\
  &\leq \frac{1}{\pi R^2}\sup_{a\in\affine}\int_{|z-a|<R}|df|^2dxdy + \varepsilon.
 \end{split}
\end{equation*}
Taking the supremum with respect to $b$ and $R\to \infty$, we get $\rho(g)\leq \rho(f)+\varepsilon$.
Let $\varepsilon\to 0$. We get $\rho(g)\leq \rho(f)$.
\end{proof}
\textbf{Step 2.}
$\rho(f) = \underline{\rho}_{\mathrm{NSA}}(\overline{\affine\cdot f}) 
= \rho_{\mathrm{NSA}}(\overline{\affine\cdot f})$. 
(This completes the proof of Theorem \ref{thm: main theorem}.)
\begin{proof}
From Step 1, we get $\underline{\rho}_{\mathrm{NSA}}(\overline{\affine\cdot f}) \leq 
\rho_{\mathrm{NSA}}(\overline{\affine\cdot f}) \leq \rho(\overline{\affine\cdot f}) = \rho(f)$.
So it is enough to prove $\underline{\rho}_{\mathrm{NSA}}(\overline{\affine\cdot f})\geq \rho(f)$.
By Corollary \ref{cor: technical corollary} $\rho(f)$ is equal to 
\[ \lim_{r\to \infty}\left[\lim_{R\to\infty}\left\{\sup_{a\in \affine}\left(\inf_{r\leq t\leq R}
                   \frac{\int_{B_t(a)}|df|^2 dxdy}{\pi t^2}\right)\right\}\right].\]
Let $\varepsilon>0$ and fix $r=r(\varepsilon)>1$ satisfying
\[ \lim_{R\to\infty}\left\{\sup_{a\in \affine}\left(\inf_{r\leq t\leq R}
                   \frac{\int_{B_t(a)}|df|^2 dxdy}{\pi t^2}\right)\right\} > \rho(f)-\varepsilon.\]
Then for any $R>r$ there exists $a(R)\in \affine$ such that 
\[ \inf_{r\leq t\leq R}\frac{1}{\pi t^2}\int_{B_t(a(R))}|df|^2(z)dxdy > \rho(f)-\varepsilon.\]
Since $\moduli(X)$ is compact, we can take a sequence $r<R_1<R_2<R_3<\dots \to \infty$
(set $a_k := a(R_k)$) such that $f(z+a_k)$ converges to some $g(z)$ in $\moduli(X)$.
(Then $g\in \overline{\affine\cdot f}$.)
We have 
\[ \inf_{r\leq t\leq R_k} \frac{1}{\pi t^2}\int_{B_t} |df|^2(z+a_k)dxdy > \rho(f)-\varepsilon. \]
Hence for any $t\geq r$  we get 
\[ \frac{1}{\pi t^2} \int_{B_t} |dg|^2(z) dxdy \geq \rho(f)-\varepsilon.\]
Then for $s\geq r\, (>1)$
\begin{equation*}
  \begin{split}
   T(s,g)  &\geq \int_r^s\left(\int_{B_t}|dg|^2dxdy\right)\frac{dt}{t} \\
   &\geq (\rho(f)-\varepsilon)\left(\frac{\pi s^2}{2}-\frac{\pi r^2}{2}\right).
  \end{split}
\end{equation*}
Hence for $s\geq r$
\[ \frac{2}{\pi s^2} T(s,g) \geq (\rho(f)-\varepsilon)\left(1-\frac{r^2}{s^2}\right).\]
Taking the limit-inf with respect to $s$, we get 
$\underline{\rho}_{\mathrm{NSA}}(g)\geq \rho(f)-\varepsilon$.
Thus 
\[\underline{\rho}_{\mathrm{NSA}}(\overline{\affine \cdot f})
\geq\underline{\rho}_{\mathrm{NSA}}(g) \geq \rho(f)-\varepsilon.\]
$\varepsilon>0$ is arbitrary. 
So $\underline{\rho}_{\mathrm{NSA}}(\overline{\affine \cdot f}) \geq \rho(f)$.
\end{proof}
\begin{remark}
By using the above argument, we can also prove that $\rho(f)$ is equal to the supremum of 
\[ \limsup_{r\to +\infty}\left(\frac{1}{\pi r^2}\int_{|z|<r}|dg|^2 dxdy\right) \]
over $g\in \overline{\affine\cdot f}$.
(The limit-sup can be replaced with the limit-inf.)
This type of energy density was introduced and studied in \cite{Tsukamoto-packing}.
\end{remark}
\begin{proof}[Proof of Theorem \ref{thm: main theorem 2}]
Let $\rho$ be the right-hand-side of (\ref{eq: statement of main theorem 2}).
$\rho\geq \rho(\mathcal{N})$ is obvious (by the $\affine$-invariance of $\mathcal{N}$).
For each $f\in \mathcal{N}$ we define a Borel measure $\mu_f$ on $\affine$ by 
$\mu_f(\Omega) := \int_\Omega |df|^2dxdy$.
Consider the set $\{\mu_f|\, f\in \mathcal{N}\}$.
This set satisfies the conditions (a) and (b) in the beginning of Section \ref{section: technical result}.
Then Theorem \ref{thm: technical theorem} implies that $\rho$ is equal to 
\[ \lim_{r\to \infty}\left[\lim_{R\to\infty}\left\{\sup_{f\in \mathcal{N}}\left(\inf_{r\leq t\leq R}
                   \frac{\int_{B_t}|df|^2 dxdy}{\pi t^2}\right)\right\}\right].\]
Then, as in the proof of Step 2, for every $\varepsilon>0$ we can find $r_\varepsilon>0$ and 
$g_\varepsilon\in \mathcal{N}$ such that for all $t\geq r_\varepsilon$ 
\[ \frac{1}{\pi t^2}\int_{B_t}|dg_\varepsilon|^2dxdy\geq \rho-\varepsilon.\]
Then $\rho(\mathcal{N})\geq \rho(g_\varepsilon)\geq \rho-\varepsilon$.
Since $\varepsilon>0$ is arbitrary, we get $\rho(\mathcal{N})\geq \rho$.
\end{proof}


\textbf{Acknowledgements.}
M. Tsukamoto was supported by Grant-in-Aid for Young Scientists (B) (21740048)
from the Ministry of Education, Culture, Sports, Science and Technology.

\vspace{0.5cm}

\address{ Masaki Tsukamoto \endgraf
Department of Mathematics, Kyoto University, Kyoto 606-8502, Japan}

\textit{E-mail address}: \texttt{tukamoto@math.kyoto-u.ac.jp}

\end{document}